\newtheorem{theorem}{Theorem}
\newtheorem{lemma}{Lemma}
\newtheorem{cor}{Corollary}
\theoremstyle{definition}
\newtheorem{remark}{Remark}
\theoremstyle{plane}
\def \beq{ \begin{equation} }
\def \eeq{\end{equation}}
\def  \q {{\bf q}}
\title{On the singularities of the curved $n$-body problem}
\begin{document}
\maketitle
\markboth{Florin Diacu}{On the singularities of the curved $n$-body problem}
\author{\begin{center}
Florin Diacu\\
\smallskip
{\small Pacific Institute for the Mathematical Sciences\\
and\\
Department of Mathematics and Statistics\\
University of Victoria\\
P.O.~Box 3060 STN CSC\\
Victoria, BC, Canada, V8W 3R4\\
diacu@math.uvic.ca\\
}\end{center}

}

\vskip0.5cm

\begin{center}
\today
\end{center}

\begin{abstract}
We study singularities of the $n$-body problem in spaces of constant curvature and generalize certain results due to Painlev\'e, Weierstrass, and Sundman. For positive curvature, some of our proofs use the correspondence between total collision solutions of the original system and their orthogonal projection---a property that offers a new method of approaching the problem in this particular case. 
\end{abstract}

\section{Introduction}

We consider the $n$-body problem in spaces of constant curvature, which we will
henceforth call the curved $n$-body problem to distinguish it from its classical  Euclidean analogue. Our goal is to study solutions that experience total collisions and, to some extent, solutions that end in some kind of hybrid singularities, i.e.~both collisional and non-collisional.

\subsection{Results}

We generalize in Section 3 a criterion proved by Paul Painlev\'e in 1897, which shows that a solution of the Euclidean $n$-body problem has a singularity if and only if the limit of the minimum distance between particles tends to zero, \cite{Pain}. Our generalization takes into account some singularities with no correspondent in the Euclidean case. But if we disregard them, a step we show to be natural from 
the physical point of view, our generalization reflects Painlev\'e's original result. This property, as well as other results obtained in our previous work \cite{Diacu1}, support the more than a century old idea that the potential given by the cotangent of the distance is the correct generalization of the Newtonian gravitational model to spaces of constant curvature.

In the second part of this paper, we establish a connection between total collision solutions and the integrals of the angular momentum in the case of positive curvature. Two classical results of the Euclidean case, the first already known to Karl Weierstrass in the 1880s, were proved by Karl Frithiof Sundman in an article\footnote{This famous 1912 paper, \cite{Sun}, was in fact an invited exposition to {\it Acta Mathematica} of two previous research articles Sundman had published in 1907, \cite{Sun1}, and 1909, \cite{Sun2}, in an obscure Finish journal called {\it Acta Societatis Scientiarum Fennicae}.} published in 1912: (i) if a solution of the $n$-body problem experiences a simultaneous total collision, all three angular momentum constants are zero, and (ii) if a triple collision occurs in the 3-body problem, the motion is planar. We will show in Sections 5 and 6 that, in a suitable setting and under some reasonable assumptions, these properties have analogues in spaces of positive constant curvature. The proofs use a lemma (developed in Section \ref{ort}), which 
establishes, under certain restrictions, the equivalence between the equations of motion and their orthogonal projection in the Euclidean space of the same dimension as the original phase space. 

\subsection{Some historical remarks}

The study of gravitation outside the Euclidean context started in the 1830s, when 
J\'anos Bolyai and Nikolai Lobachevsky extended the Newtonian 2-body problem to the hyperbolic space. These co-discoverers of hyperbolic geometry independently proposed a gravitational force proportional to the inverse area of a 2-dimensional sphere having the same radius as the distance between bodies. Ernest Schering showed in 1870 that the terms of the potential involve the hyperbolic cotangent of the hyperbolic distance, \cite{Sche}. Some years later, Wilhelm Killing adopted the cotangent potential in the positive curvature case too, \cite{Kil}. Subsequent studies of the 2-body problem proved that Kepler's laws admit natural generalizations, \cite{Lie1}, as does Bertrand's theorem, according to which there exist only two analytic central potentials for which all bounded orbits are closed, \cite{Lie2}. Other attempts (such as those of Rudolph Lipschitz, \cite{Lip}) at generalizing the problem using potentials that do not involve the cotangent of the distance failed to recover the classical properties of the Euclidean case. A more detailed history of these developments is given in \cite{Diacu1}, the first piece of work that derives and studies the equations of motion for any number of bodies, and proves Saari's conjecture\footnote{Details about Saari's conjecture can be found in \cite{Diacu2} and \cite{Diacu3}.} in the geodesic case. Paper \cite{Diacu1} is also a prerequisite for appreciating the results we prove in this article.


\section{Equations of motion}\label{equations}

We first describe the equations of motion on $2$-dimensional manifolds of
constant curvature, namely spheres embedded in $\mathbb{R}^3$ for $\kappa>0$ 
and hyperboloids\footnote{The hyperboloid corresponds to Weierstrass's model
of hyperbolic geometry (see Appendix in \cite{Diacu1}).} embedded in the Minkovski space ${\mathbb{M}}^3$ for $\kappa<0$, and will discuss in Section \ref{3dim} the generalization to three dimensions for $k>0$. 

Consider the masses $m_1,\dots, m_n>0$ in $\mathbb{R}^3$ for $\kappa>0$
and $\mathbb{M}^3$ for $\kappa<0$, whose positions are given by the vectors
${\bf q}_i=(x_i,y_i,z_i), \ i=1,\dots, n$, and let ${\bf q}=
({\bf q}_1,\dots,{\bf q}_n)$ be the configuration of the system.
We define the gradient operator with respect to the vector ${\bf q}_i$ as
$$
\bar\nabla_{{\bf q}_i}=(\partial_{x_i},\partial_{y_i},\sigma\partial_{z_i}),
$$
where
$$
\sigma=
\begin{cases}
+1, \ \ {\rm for} \ \ \kappa>0\cr
-1, \ \ {\rm for} \ \ \kappa<0,\cr
\end{cases}
$$
and let $\bar\nabla$ denote the operator
$(\bar\nabla_{{\bf q}_1},\dots,\bar\nabla_{{\bf q}_n})$.
For the 3-dimensional vectors ${\bf a}=(a_x,a_y,a_z)$ and ${\bf b}=(b_x,b_y,b_z)$, 
we define the inner product
\begin{equation}
{\bf a}\odot{\bf b}:=(a_xb_x+a_yb_y+\sigma a_zb_z)
\end{equation}
and the cross product
\begin{equation}
{\bf a}\otimes{\bf b}:=(a_yb_z-a_zb_y, a_zb_x-a_xb_z, 
\sigma(a_xb_y-a_yb_x)).
\end{equation}

The Hamiltonian function of the system describing the motion of the $n$-body problem in spaces of constant curvature is
$$H_\kappa({\bf q},{\bf p})=T_\kappa({\bf q},{\bf p})-U_\kappa({\bf q}),$$
where 
$$
T_\kappa({\bf q},{\bf p})=\sum_{i=1}^nm_i^{-1}({\bf p}_i\odot{\bf p}_i)(\kappa{\bf q}_i\odot{\bf q}_i)
$$
is the kinetic energy and
\begin{equation}
U_\kappa({\bf q})={1\over 2}\sum_{i=1}^n\sum_{j=1,j\ne i}^n{m_im_j
(\sigma\kappa)^{1/2}{\kappa{\bf q}_i\odot{\bf q}_j}\over
[\sigma(\kappa{\bf q}_i
\odot{\bf q}_i)(\kappa{\bf q}_j\odot{\bf q}_j)-\sigma({\kappa{\bf q}_i\odot{\bf q}_j
})^2]^{1/2}}
\label{forcef}
\end{equation}
is the force function, $-U_\kappa$ representing the potential energy\footnote{In \cite{Diacu1}, we showed how this expression of $U_\kappa$ follows from the cotangent potential for $\kappa\ne 0$, and that $U_0$ is the Newtonian potential of the Euclidean problem, obtained as $\kappa\to 0$.}. Then the Hamiltonian form of the equations of motion is given by the system
\begin{equation}
\begin{cases}
\dot{\bf q}_i=
m_i^{-1}{\bf p}_i,\cr
\dot{\bf p}_i=\bar\nabla_{{\bf q}_i}U_\kappa({\bf q})-m_i^{-1}\kappa({\bf p}_i\odot{\bf p}_i)
{\bf q}_i, \ \  i=1,\dots,n, \ \kappa\ne 0,
\label{Ham}
\end{cases}
\end{equation}
the gradient of the force function having the expression
\begin{equation}
{\bar\nabla}_{{\bf q}_i}U_\kappa({\bf q})=\sum_{\substack{j=1\\ j\ne i}}^n{m_im_j(\sigma\kappa)^{3/2}(\kappa{\bf q}_j\odot{\bf q}_j)[(\kappa{\bf q}_i\odot{\bf q}_i){\bf q}_j-(\kappa{\bf q}_i\odot{\bf q}_j){\bf q}_i]\over
[\sigma(\kappa{\bf q}_i
\odot{\bf q}_i)(\kappa{\bf q}_j\odot{\bf q}_j)-\sigma({\kappa{\bf q}_i\odot{\bf q}_j
})^2]^{3/2}}.
\label{gradient}
\end{equation}

The motion of the bodies is confined to the surface of nonzero constant curvature $\kappa$, i.e.\ $({\bf q},{\bf p})\in {\bf T}^*({\bf M}_\kappa^2)^n$, where 
$$
{\bf M}^2_\kappa=\{(x,y,z)\in\mathbb{R}^3\ |\ \kappa(x^2+y^2+\sigma z^2)=1\}
$$ 
(in particular, ${\bf M}^2_1={\bf S}^2$ and ${\bf M}^2_{-1}={\bf H}^2$) and ${\bf T}^*({\bf M}_\kappa^2)^n$ is the cotangent bundle of the configuration space $({\bf M}^2_\kappa)^n$. For $\kappa>0$ we will also denote ${\bf M}^2_\kappa$ by ${\bf S}^2_\kappa$,
while for $\kappa<0$ we will denote it by ${\bf H}^2_\kappa$.

Notice that the $n$ constraints given by $\kappa{\bf q}_i\odot{\bf q}_i=1$ imply that ${\bf q}_i\odot{\bf p}_i=0$, so the $6n$-dimensional system \eqref{Ham} has  $2n$ constraints.
The Hamiltonian function provides the integral of energy,
$$
H_\kappa({\bf q},{\bf p})=h,
$$
where $h$ is the energy constant. Equations \eqref{Ham} also have the integrals 
of the angular momentum,
\begin{equation}
\sum_{i=1}^n{\bf q}_i\otimes{\bf p}_i={\bf c},
\end{equation}
where ${\bf c}=(\alpha, \beta, \gamma)$ is a constant vector. Unlike in the
Euclidean case, there are no integrals of the center of mass and linear
momentum. Their absence complicates the study of the equations because
many of the standard methods don't apply anymore. The curved
$n$-body problem is thus a fresh source for new mathematical developments.


\section{Singularities}

Equations \eqref{Ham} are undefined in the set ${\bf \Delta}:=\cup_{1\le i<j\le n}{\bf \Delta}_{ij}$, with
$${\bf \Delta}_{ij}:=\{{\bf q}\in({\bf M}^2_\kappa)^n\ |\ (\kappa{\bf q}_i\odot{\bf q}_j)^2=1\},$$ 
where the force function and its gradient have zero denominators. Thus $\bf\Delta$ contains all the singularities of the equations of motion. The singularity condition, $(\kappa{\bf q}_i\odot{\bf q}_j)^2=1$, suggests that we
consider two cases, so we write ${\bf \Delta}_{ij}={\bf \Delta}_{ij}^+\cup{\bf \Delta}_{ij}^-$, where
$
{\bf \Delta}_{ij}^+:=\{{\bf q}\in({\bf M}^2_\kappa)^n\ |\ \kappa{\bf q}_i\odot{\bf q}_j=1\}
$
and
$ 
{\bf \Delta}_{ij}^-:=\{{\bf q}\in({\bf M}^2_\kappa)^n\ |\ \kappa{\bf q}_i\odot{\bf q}_j=-1\}.
$
Accordingly, we define 
$$
{\bf \Delta}^+:=\cup_{1\le i<j\le n}{\bf \Delta}_{ij}^+\ \ {\rm and}\ \
{\bf \Delta}^-:=\cup_{1\le i<j\le n}{\bf \Delta}_{ij}^-.
$$
Then ${\bf \Delta}={\bf \Delta}^+\cup{\bf \Delta}^-$. The elements of ${\bf \Delta}^+$ correspond to collisions for any $\kappa\ne 0$,
whereas the elements of ${\bf \Delta}^-$ correspond to antipodal singularities for $\kappa>0$. The latter occur when two bodies are at the opposite 
ends of the same diameter of a sphere. For $\kappa<0$, antipodal 
singularities do not exist.

The set $\bf\Delta$ is related to singularities which arise from the question 
of existence and uniqueness of initial value problems. For initial conditions 
$({\bf q},{\bf p})(0)\in{\bf T}^*({\bf M}_\kappa^2)^n$ with ${\bf q}(0)\notin\bf\Delta$, standard results of the theory of differential equations ensure 
local existence and uniqueness of an analytic solution $({\bf q},{\bf p})$ defined 
on some interval $[0,t^+)$. Since the surfaces ${\bf M}^2_\kappa$ are connected, 
this solution can be analytically extended to an interval $[0,t^*)$, with $0<t^+\le t^*\le\infty$. If $t^*=\infty$, the solution is globally defined. But if $t^*<\infty$, the solution is called singular, and we say that it has a singularity at time $t^*$. 

We have seen in \cite{Diacu1} that, for $\kappa>0$, no solutions encounter antipodal singularities alone. But there are solutions that encounter collision singularities and
solutions that encounter collision-antipodal singularities, as for instance when two bodies collide at the north pole while a third body is at the south pole.


\subsection{Generalization of Painlev\'e's theorem} A classical result due to Paul Painlev\'e, in the Euclidean case, shows that an analytic solution defined on
$[0,t^*)$ has a singularity at $t^*$ if and only if the inferior limit of the minimum of the mutual distances vanishes when $t\to t^*$, \cite{Pain}, a detailed presentation of which can be found in \cite{Diacu0}. Under a certain assumption, we can translate this property to spaces of constant curvature. To prove these results, we start with a couple of lemmas, which generalize some properties known in the traditional literature, \cite{Win}.


\begin{lemma}
If $({\bf q},{\bf p})$ is an analytic solution of equations \eqref{Ham}, defined
on $[0,t^*)$, with $t^*$ a singularity, then
$$
\liminf_{t\to t^*}\min_{1<i\le j<n}|(\kappa{\bf q}_i\odot{\bf q}_j)^2-1|=0.
$$
\label{liminfmin}
\end{lemma}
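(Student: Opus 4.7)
The plan is to argue by contradiction: assume the $\liminf$ is strictly positive, and conclude that the right-hand sides of \eqref{Ham} remain bounded on $[0,t^*)$, which would let us analytically continue the solution past $t^*$ and contradict the fact that $t^*$ is a singularity. Concretely, suppose there exist $\delta>0$ and $t_0\in[0,t^*)$ such that $|(\kappa{\bf q}_i\odot{\bf q}_j)^2-1|\ge\delta$ for every pair $i<j$ and every $t\in[t_0,t^*)$. Using the constraints $\kappa{\bf q}_i\odot{\bf q}_i=1$, the denominator appearing in \eqref{forcef} and \eqref{gradient} reduces to $[\sigma(1-(\kappa{\bf q}_i\odot{\bf q}_j)^2)]^{3/2}$; the quantity $\sigma(1-(\kappa{\bf q}_i\odot{\bf q}_j)^2)$ is nonnegative in both cases (by Cauchy--Schwarz on ${\bf S}^2_\kappa$ and by the reverse Cauchy--Schwarz for timelike vectors on ${\bf H}^2_\kappa$), so the standing hypothesis yields a uniform lower bound for this denominator on $[t_0,t^*)$.

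Next, I would exploit the energy integral $T_\kappa({\bf q},{\bf p})-U_\kappa({\bf q})=h$. The lower bound on the denominator, together with the fact that on the constraint manifold the numerator $\kappa{\bf q}_i\odot{\bf q}_j$ corresponds to a bounded cosine (resp.\ a hyperbolic cosine tied to the denominator through $\cosh^2-\sinh^2=1$), shows that $U_\kappa$ is bounded on $[t_0,t^*)$. Hence $T_\kappa=\sum_i m_i^{-1}({\bf p}_i\odot{\bf p}_i)$ is bounded there as well; because the induced metric on ${\bf M}^2_\kappa$ is Riemannian (the hyperboloid is a spacelike surface in ${\mathbb M}^3$, so the tangent vectors ${\bf p}_i$ have ${\bf p}_i\odot{\bf p}_i\ge 0$), this actually bounds $|{\bf p}_i|$ in the ambient Euclidean norm.

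With $|{\bf p}_i|$ bounded, I would control the positions: for $\kappa>0$ the sphere ${\bf S}^2_\kappa$ is compact and the bound is automatic, while for $\kappa<0$ we have $\dot{\bf q}_i=m_i^{-1}{\bf p}_i$, so ${\bf q}_i$ can grow at most linearly on the finite interval $[t_0,t^*)$ and therefore stays in a compact set. Returning to \eqref{gradient} with bounded numerator (involving only ${\bf q}_i,{\bf q}_j$ and $\kappa{\bf q}_i\odot{\bf q}_j$) and uniformly positive denominator, we see that $\bar\nabla_{{\bf q}_i}U_\kappa$ is bounded; the remaining term $m_i^{-1}\kappa({\bf p}_i\odot{\bf p}_i){\bf q}_i$ is bounded too. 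Thus the right-hand sides of \eqref{Ham} are bounded, so $({\bf q}(t),{\bf p}(t))$ has a limit as $t\to t^*$ lying outside ${\bf\Delta}$, and standard local existence for analytic ODEs extends the solution beyond $t^*$, contradicting the assumption that $t^*$ is a singularity.

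The main obstacle I expect is the hyperbolic case: one must verify carefully that the numerator of the force function and its gradient stay bounded even though the configuration space ${\bf H}^2_\kappa$ is noncompact, and that ${\bf p}_i\odot{\bf p}_i\ge 0$ really does control $|{\bf p}_i|$ (since $\odot$ is only a Lorentzian form). Once these two points are settled by restricting to the spacelike tangent planes of the hyperboloid and using the linear-growth argument for ${\bf q}_i$, the contradiction scheme proceeds uniformly in the sign of $\kappa$.
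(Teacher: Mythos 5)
Your proposal is correct and follows essentially the same route as the paper: assume the $\liminf$ is positive, bound the denominators of $U_\kappa$ and its gradient from below on a terminal interval, use the energy integral to bound the kinetic energy and hence $\dot{\bf p}$ and $\ddot{\bf q}$, conclude that $({\bf q},{\bf p})$ tends to a limit outside ${\bf\Delta}$ as $t\to t^*$, and extend the solution analytically past $t^*$ to contradict the assumption that $t^*$ is a singularity. You are in fact somewhat more explicit than the paper about the hyperbolic case---converting the bound on ${\bf p}_i\odot{\bf p}_i$ into a Euclidean bound on the spacelike tangent planes and controlling the positions on the noncompact hyperboloid---points the paper's own proof passes over silently.
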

\begin{proof}
Notice first that using the constraints $\kappa{\bf q}_i\odot{\bf q}_i=1, \ i=1,\dots,n$, the integral of energy becomes
\begin{equation}
\sum_{i=1}^nm_i^{-1}({\bf p}_i\odot{\bf p}_i)-\sum_{i=1}^n\sum_{j=1,j\ne i}^n
{m_im_j(\sigma\kappa)^{1/2}\kappa\q_i\odot\q_j\over[{\sigma-\sigma(\kappa\q_i\odot\q_j)^2]^{1/2}}}=2h.
\label{energy}
\end{equation}
Also, from the equations of motion \eqref{Ham} we can conclude that
\begin{equation}
\dot{\bf p}_i=\sum_{j=1,j\ne i}^n{m_im_j(\sigma\kappa)^{3/2}[{\bf q}_j-(\kappa{\bf q}_i\odot{\bf q}_j){\bf q}_i]\over
[\sigma-\sigma(\kappa{\bf q}_i\odot{\bf q}_j)^2]^{3/2}}-m_i^{-1}\kappa({\bf p}_i\odot{\bf p}_i){\bf q}_i.
\label{force}
\end{equation}
We can now prove the necessity of the condition in Lemma \ref{liminfmin}. For this, assume that there is a constant $c>0$ such that 
$$\liminf_{t\to t^*}\min_{1<i\le j<n}|(\kappa{\bf q}_i\odot{\bf q}_j)^2-1|\ge c.$$
Then there is a time $t_0$ in $[0,t^*)$ and constants $b_{ij}>c$, with $1\le i<j\le n$,  for which $[\sigma-\sigma(\kappa{\bf q}_i\odot{\bf q}_j)^2]^{1/2}\ge b_{ij}$ for all $t$ in $[t_0,t^*)$. Consequently equation \eqref{energy} implies that $\sum_{i=1}^nm_i^{-1}({\bf p}_i\odot{\bf p}_i)$ is bounded, so every term of this sum is bounded as well. Then equations \eqref{force} guarantee the existence of $n$ constants $\delta_i>0$ such that $|\dot{\bf p}_i|\le \delta_i, \ i=1,\dots, n$, and
therefore all $\ddot{\bf q}_i$ are bounded.

Writing the configuration vector ${\bf q}$ as a Taylor series about $t_0$ with integral
remainder,
$$
{\bf q}(t)={\bf q}(t_0)+(t-t_0)\dot{\bf q}(t_0)+\int_{t_0}^t(t-\tau)\ddot{\bf q}(\tau)d\tau,
$$
and using the fact that $\ddot{\bf q}$ is bounded, we can conclude that there is
a vector $({\bf q}^*,{\bf p}^*)$ in phase space such that
$\lim_{t\to t^*}({\bf q},{\bf p})(t)=({\bf q}^*,{\bf p}^*)$, i.e.\ the position vectors
${\bf q}_i$ and momentum vectors ${\bf p}_i$ have limiting 
positions ${\bf q}_i^*$ and ${\bf p}_i^*$, respectively. So $[\sigma-\sigma(\kappa{\bf q}_i^*\odot{\bf q}_j^*)^2]^{1/2}\ge b_{ij}$, therefore 
$|(\kappa{\bf q}_i^*\odot{\bf q}_j^*)^2-1|>0$, which means
that the distance between particles is not zero, and they are neither at a collision-antipodal
singularity if $\kappa>0$. But then the particles can keep moving. This physical conclusion suggests a contradiction with the hypothesis by showing that the solution is analytic at $t^*$. To prove this fact rigorously, notice that the domain of the solution $({\bf q},{\bf p})$ depends on the constants 
$\delta_i$, therefore on $b_{ij}$, and finally on $c$, but is independent of the
initial conditions. So by choosing the initial data, along the same solution, close enough to $t^*$ and
$({\bf q}^*,{\bf p}^*)$, the solution remains analytic at $t^*$, a conclusion
which contradicts the existence of the constant $c$ as described above, and
thus completes the proof.
\end{proof}


\begin{lemma}
Assume that $({\bf q},{\bf p})$ is an analytic solution of equations \eqref{Ham}, defined on $[0,t^*)$, that is bounded away from collision-antipodal configurations if $\kappa>0$. 
Then, if
$$
\liminf_{t\to t^*}\min_{1<i\le j<n}|\kappa{\bf q}_i\odot{\bf q}_j-1|=0,
$$
$t^*$ is a singularity of the solution.
\label{liminfminrec}
\end{lemma}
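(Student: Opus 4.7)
The plan is to argue by contradiction. I would first suppose that $t^*$ is not a singularity of the solution; then by the existence-uniqueness discussion at the opening of this section, the analytic trajectory $(\q,{\bf p})$ extends analytically through $t^*$, so $(\q,{\bf p})(t^*)$ is a well-defined point of the phase space ${\bf T}^*({\bf M}_\kappa^2)^n$. Because equations \eqref{Ham} are defined only off the singular set, this forces $\q(t^*)\notin{\bf \Delta}$, i.e., $(\kappa\q_i(t^*)\odot\q_j(t^*))^2\ne 1$ for every pair $i\ne j$.

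Next I would exploit the liminf hypothesis to derive a contradiction with the above. By definition there is a sequence $t_k\nearrow t^*$ along which $\min_{i<j}|\kappa\q_i(t_k)\odot\q_j(t_k)-1|\to 0$, and since the set of pairs is finite, a pigeonhole/subsequence argument selects a fixed pair $(i,j)$ with $\kappa\q_i(t_k)\odot\q_j(t_k)\to 1$. Continuity of the assumed analytic extension at $t^*$ then yields $\kappa\q_i(t^*)\odot\q_j(t^*)=1$, so $\q(t^*)\in{\bf \Delta}^+_{ij}\subset{\bf \Delta}$, which contradicts the previous paragraph and completes the argument.

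The role of the hypothesis that the motion is bounded away from collision-antipodal configurations when $\kappa>0$ is to guarantee that the limit pair we construct is part of a pure collision rather than of a mixed collision-antipodal singularity, so that the resulting limit lies cleanly in ${\bf \Delta}^+$ and not simultaneously in ${\bf \Delta}^-$; for $\kappa<0$, where antipodal configurations are geometrically impossible, the hypothesis is vacuous. I do not anticipate a serious technical obstacle, since the argument is fundamentally a continuity-plus-pigeonhole contradiction. The only point that merits a bit of care is verifying that analytic extendability past $t^*$ really does give a continuous limit of $\q$ at $t^*$ lying in the domain of \eqref{Ham}, but this is a standard consequence of the analytic-extension framework recalled at the start of this section, so the remainder of the proof is essentially bookkeeping.
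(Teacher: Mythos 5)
There is a genuine gap at the pivot of your argument: the claim that analytic extendability of the solution through $t^*$ forces ${\bf q}(t^*)\notin{\bf \Delta}$. In the framework of this paper that implication is false, and the Remark immediately following the lemma exhibits the counterexample: a three-body orbit on ${\bf S}^2_1$ that reaches a collision-antipodal configuration with finite forces and velocities, so that the solution remains analytic at $t^*$ even though ${\bf q}(t^*)$ lies in ${\bf \Delta}^+_{12}\subset{\bf \Delta}$. The set ${\bf \Delta}$ is where the vector field has vanishing denominators, but along particular trajectories those singularities can be removable, so reaching ${\bf \Delta}$ is not by itself an obstruction to analytic continuation. Worse, your argument never actually uses the collision-antipodal exclusion at the decisive step: the pair you extract by pigeonhole always lands in ${\bf \Delta}^+$ (since $\kappa{\bf q}_i\odot{\bf q}_j\to 1$), whether or not \emph{other} pairs simultaneously approach ${\bf \Delta}^-$. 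Taken at face value, your argument would therefore also ``prove'' the lemma for the excluded collision-antipodal orbits, i.e., it proves a statement the paper explicitly shows to be false; your closing paragraph misidentifies the role of the hypothesis.

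What is actually needed, and what the paper supplies, is a quantitative argument that approaching ${\bf \Delta}^+$ while staying away from ${\bf \Delta}^-$ genuinely forces a blow-up. The paper reasons: if $t^*$ were not a singularity, then $\ddot{\bf q}$, hence $\dot{\bf p}$ and ${\bf p}$, would remain bounded, and the energy integral \eqref{energy} would then bound $U_\kappa({\bf q})$ near $t^*$. But the terms of $U_\kappa$ with small denominators are all large and of the \emph{same} sign when $\kappa{\bf q}_i\odot{\bf q}_j$ is near $1$, and the collision-antipodal exclusion is used precisely here: it rules out compensating terms of the opposite sign coming from pairs near ${\bf \Delta}^-$, so no cancellation can keep $U_\kappa$ bounded, and $\limsup_{t\to t^*}U_\kappa({\bf q}(t))=\infty$, a contradiction. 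Your continuity-plus-pigeonhole step is fine as far as it goes, but it must be followed by this (or an equivalent) blow-up argument; without it the proof does not close.
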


\begin{proof}
Assume that $\liminf_{t\to t^*}\min_{1<i\le j<n}|\kappa{\bf q}_i\odot{\bf q}_j-1|=0$. 
Obviously, if $\ddot{\bf q}$ becomes unbounded as $t\to t^*$, then $t^*$ is a singularity,
and Lemma \ref{liminfminrec} is proved. We consequently assume $\ddot{\bf q}$ to be 
bounded. Then $\dot{\bf p}$ is bounded as $t\to t^*$, so the momentum ${\bf p}$ is bounded as well. Therefore we can conclude from equation \eqref{energy} that $U_\kappa({\bf q})$ is bounded as $t\to t^*$. 

Recall, however, that the terms defining $U_\kappa$ have denominators of the form 
$[\sigma-\sigma(\kappa{\bf q}_i\odot{\bf q}_j)^2]^{1/2}$, so when the quantities
$\sigma-\sigma(\kappa{\bf q}_i\odot{\bf q}_j)^2$ are small, the corresponding terms
of the force function become large in absolute value. But these terms have one sign if $\kappa{\bf q}_i\odot{\bf q}_j$ is near 1 and the opposite sign if it is near $-1$. As we excluded from our hypothesis solutions that come close to collision-antipodal configurations, the quantities $\kappa{\bf q}_i\odot{\bf q}_j$ are bounded away from $-1$. 
Consequently, as $\liminf_{t\to t^*}\min_{1<i\le j<n}|\kappa{\bf q}_i\odot{\bf q}_j-1|=0$, $\limsup_{t\to t^*}U_\kappa({\bf q}(t))=\infty$, a contradiction
with the conclusion drawn at the end of the previous paragraph.
So the condition assumed in Lemma \ref{liminfminrec} makes $t^*$ a singularity.
This completes the proof.
\end{proof}


\begin{remark}
The reason for having to exclude collision-antipodal configurations from
the hypothesis of Lemma \ref{liminfminrec} is connected to a property proved in Theorem 1 (iii) of \cite{Diacu1}. We showed there that there are choices 
of masses and initial conditions for which a 3-body problem taking place in ${\bf S}_1^2$ can have finite forces and velocities at a collision-antipodal configuration; in other words the solution remains analytic at $t^*$.  For instance, this is the case when two bodies
of mass $4m$ and a third body of mass $m$ move on a great circle of ${\bf S}_1^2$,
forming at each moment an isosceles triangle, and such that the larger
bodies collide in finite time while the smaller body reaches the diametrically opposed side of the circle. So there are orbits that do not
experience a singularity at $t^*$ but for which
$\liminf_{t\to t^*}\min_{1<i\le j<n}|\kappa{\bf q}_i\odot{\bf q}_j-1|=0$. 
\end{remark}

We can now state and prove a generalization of Painlev\'e's theorem to
spaces of constant curvature. Like for the above two lemmas, we split the result
into two statements, one proving necessity and the other sufficiency.


\begin{theorem}
If $({\bf q},{\bf p})$ is an analytic solution of equations \eqref{Ham} defined on $[0,t^*)$, with $t^*$ a singularity, then
$$
\lim_{t\to t^*}\min_{1<i\le j<n}|(\kappa{\bf q}_i\odot{\bf q}_j)^2-1|=0.
$$
\end{theorem}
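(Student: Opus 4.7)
The plan is to upgrade Lemma~\ref{liminfmin} from a $\liminf=0$ statement to a genuine limit by a continuity argument that is a curved analogue of the standard Euclidean trick. Write
$$
R(t):=\min_{1\le i<j\le n}|(\kappa{\bf q}_i(t)\odot{\bf q}_j(t))^2-1|.
$$
Lemma~\ref{liminfmin} already gives $\liminf_{t\to t^*}R(t)=0$, so if the theorem fails I may pick $\epsilon>0$ with $\limsup_{t\to t^*}R(t)>\epsilon$ and must derive a contradiction.

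The first step is to obtain uniform force, momentum, and velocity bounds on the sublevel set where $R\ge\epsilon/2$. On that set each denominator in \eqref{forcef} is at least $(\epsilon/2)^{1/2}$, and together with the constraint $\kappa{\bf q}_i\odot{\bf q}_i=1$ this bounds $U_\kappa({\bf q})$ by a constant $K=K(\epsilon,m_1,\dots,m_n)$, valid for both signs of $\kappa$. The energy integral \eqref{energy} then yields $\sum_i m_i^{-1}({\bf p}_i\odot{\bf p}_i)\le 2(h+K)$, hence $|{\bf p}_i|\le P$ and $|\dot{\bf q}_i|\le P/m_i$ whenever $R\ge\epsilon/2$. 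Differentiating $(\kappa{\bf q}_i\odot{\bf q}_j)^2-1$ along the flow, and working pairwise with $f_{ij}=|(\kappa{\bf q}_i\odot{\bf q}_j)^2-1|$ to bypass the nondifferentiability of $\min$, produces a uniform Lipschitz bound $|\dot R|\le C$ on $\{R\ge\epsilon/2\}$, with $C=C(\epsilon,h,m_1,\dots,m_n)$.

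Now set $\delta:=\epsilon/(4C)$. By the choice of $\epsilon$ there exists $t_0\in[t^*-\delta,t^*)$ with $R(t_0)>\epsilon$, and I would let
$$
u_0:=\sup\{t\ge t_0:R(s)\ge\epsilon/2\text{ for every }s\in[t_0,t]\}.
$$
If $u_0\ge t^*$ then $R\ge\epsilon/2$ on the entire interval $[t_0,t^*)$, and the proof of Lemma~\ref{liminfmin} applies word for word with this $t_0$ in place of the $t_0$ there to produce an analytic extension of the solution past $t^*$, contradicting the hypothesis that $t^*$ is a singularity. If instead $u_0<t^*$, continuity forces $R(u_0)=\epsilon/2$, but then $|\dot R|\le C$ on $[t_0,u_0]$ combined with $R(t_0)-R(u_0)>\epsilon/2$ gives $u_0-t_0>\epsilon/(2C)=2\delta$, incompatible with both $u_0<t^*$ and $t_0>t^*-\delta$.

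The main technical obstacle is the Lipschitz estimate $|\dot R|\le C$ on $\{R\ge\epsilon/2\}$: in the hyperbolic case the ambient vectors ${\bf q}_i$ are not a priori bounded in Euclidean norm even when $R$ is, so one must verify that the $\odot$-inner products appearing in $\dot f_{ij}=2(\kappa{\bf q}_i\odot{\bf q}_j)(\kappa\dot{\bf q}_i\odot{\bf q}_j+\kappa{\bf q}_i\odot\dot{\bf q}_j)$ are nevertheless controlled by the energy-based momentum bound. I would handle this by noting that the $\odot$-momentum bound together with the tangency constraint ${\bf q}_i\odot{\bf p}_i=0$ controls $\dot{\bf q}_i$ in the intrinsic $\odot$-norm, which combined with the already-controlled scalars $\kappa{\bf q}_i\odot{\bf q}_j$ on $\{R\ge\epsilon/2\}$ gives the required uniform bound on $|\dot R|$.
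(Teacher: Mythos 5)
Your overall architecture is sound but genuinely different from the paper's: the paper never passes through a Lipschitz estimate on $R$. Instead, from $\limsup_{t\to t^*}R(t)\ge c$ it extracts a sequence $t_n\to t^*$ along which every denominator in \eqref{forcef} is bounded below, bounds $U_\kappa$ and hence (via \eqref{energy}) the momenta \emph{at those times only}, and then invokes the uniform lower bound on the local existence time (the observation, already made inside the proof of Lemma \ref{liminfmin}, that the guaranteed interval of analyticity depends only on these bounds and not on the particular initial point) to restart the solution at some $t_{n_0}$ near $t^*$ and continue it analytically past $t^*$. That route needs no control of $R$ \emph{between} the sampled times, which is exactly where your argument runs into trouble.

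The genuine gap is the claimed uniform bound $|\dot R|\le C$ on $\{R\ge\epsilon/2\}$ when $\kappa<0$. There $\kappa{\bf q}_i\odot{\bf q}_j=\cosh(\sqrt{|\kappa|}\,d_{ij})\ge 1$, where $d_{ij}$ is the hyperbolic distance, so $\{R\ge\epsilon/2\}$ only keeps these scalars away from $1$; it does \emph{not} bound them above, and your appeal to ``already-controlled scalars $\kappa{\bf q}_i\odot{\bf q}_j$ on $\{R\ge\epsilon/2\}$'' is unfounded. Concretely, decomposing ${\bf q}_j$ into its components along and $\odot$-orthogonal to ${\bf q}_i$ and using ${\dot{\bf q}}_i\odot{\bf q}_i=0$ gives
$$
|\dot{\bf q}_i\odot{\bf q}_j|\le(\dot{\bf q}_i\odot\dot{\bf q}_i)^{1/2}\bigl(|\kappa|^{-1}[(\kappa{\bf q}_i\odot{\bf q}_j)^2-1]\bigr)^{1/2},
$$
so both this factor and the prefactor $2(\kappa{\bf q}_i\odot{\bf q}_j)$ in $\dot f_{ij}$ grow without bound as the bodies separate on the hyperboloid; hence $\dot R$ is unbounded on $\{R\ge\epsilon/2\}$, while your final step applies $|\dot R|\le C$ on all of $[t_0,u_0]$, where $R(t_0)$ may be arbitrarily large. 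The repair is easy: you only need the minimizing pair to be unable to cross the band $\epsilon/2\le f_{ij}\le\epsilon$ in time less than $\delta$, and on that band $(\kappa{\bf q}_i\odot{\bf q}_j)^2\le 1+\epsilon$ is bounded, so the Lipschitz constant is available; formally, replace each $f_{ij}$ by the truncation $\min(f_{ij},\epsilon)$, which \emph{is} uniformly Lipschitz on $\{R\ge\epsilon/2\}$, and run the crossing argument on the truncated minimum. For $\kappa>0$ compactness makes everything bounded and your argument works as written. (Minor point: in your Case 1 you need not rerun the proof of Lemma \ref{liminfmin}; $R\ge\epsilon/2$ on $[t_0,t^*)$ directly contradicts its conclusion $\liminf_{t\to t^*}R=0$.)
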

\begin{proof}

To prove the necessity of the condition, assume that there is a constant $c>0$ such that
$$
\limsup_{t\to t^*}\min_{1\le i<j\le n}|(\kappa{\bf q}_i\odot{\bf q}_j)^2-1|\ge c.
$$
Then there exists a sequence of times, $(t_n)_{n\in{\mathbb N}}$ (where $\mathbb N$ represents the set of positive integers), with $t_n\to t^*$
such that $|[\kappa{\bf q}_i(t_n)\odot{\bf q}_j(t_n)]^2-1|\ge c>0$ for all $i, j$ with $1\le i<j\le n$. This means that there is a positive constant $b$ for which $U({\bf q}(t_n))\le b$ for all $n\in{\mathbb N}$. Equation \eqref{energy} leads therefore to the conclusion that
$\sum_{i=1}^nm_i^{-1}({\bf p}_i(t_n)\odot{\bf p}_i(t_n))\le 2(b+h)$ for all $n\in{\mathbb N}$.
Therefore there is a constant $\alpha>0$ for which $|{\bf p}(t_n)|\le\alpha$ for all 
$n\in{\mathbb N}$. But as we already showed in the proof of Lemma \ref{liminfmin}, the
domain of the solution is independent of the choice of initial conditions. 
Consequently we can choose some initial data $t_{n_0}, {\bf q}(t_{n_0}), {\bf p}(t_{n_0})$ with $t_{n_0}$ close enough to $t^*$ to make the solution analytic at $t^*$,
which means that $t^*$ is not a singularity. This contradiction proves the
necessity of the condition, and completes the proof.
\end{proof}


\begin{theorem}
Assume that $({\bf q},{\bf p})$ is an analytic solution of equations \eqref{Ham}, defined on $[0,t^*)$, that is bounded away from collision-antipodal configurations if $\kappa>0$. 
Then, if
$$
\lim_{t\to t^*}\min_{1<i\le j<n}|\kappa{\bf q}_i\odot{\bf q}_j-1|=0,
$$
$t^*$ is a singularity of the solution.
\label{limminrec}
\end{theorem}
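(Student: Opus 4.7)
The plan is to recognize this theorem as an essentially immediate corollary of Lemma \ref{liminfminrec}. The hypothesis $\lim_{t\to t^*}\min_{1<i\le j<n}|\kappa\q_i\odot\q_j-1|=0$ is strictly stronger than the $\liminf$ version used in that lemma, since whenever a limit exists and equals zero the corresponding $\liminf$ does too. Combined with the common standing assumption that the solution is bounded away from collision-antipodal configurations when $\kappa>0$, this is exactly the input of Lemma \ref{liminfminrec}, and the conclusion that $t^*$ is a singularity follows at once.

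If one prefers a self-contained argument in the same spirit as the lemma, I would instead argue by contradiction: suppose $t^*$ is not a singularity, so that $(\q,{\bf p})$ extends analytically through $t^*$. Then in particular $\ddot{\q}$ remains bounded on $[0,t^*)$. The formula \eqref{force} then gives bounded $\dot{\bf p}$, hence bounded ${\bf p}$, and the energy integral \eqref{energy} forces $U_\kappa(\q(t))$ to stay bounded as $t\to t^*$. On the other hand, the hypothesis produces at least one pair $(i,j)$ along which $\kappa\q_i\odot\q_j\to 1$, which makes the denominator $[\sigma-\sigma(\kappa\q_i\odot\q_j)^2]^{1/2}$ of the corresponding term of $U_\kappa$ tend to zero. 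Since the collision-antipodal exclusion keeps every quantity $\kappa\q_k\odot\q_\ell$ uniformly away from $-1$, the divergent term cannot be cancelled by the others, so $\limsup_{t\to t^*}U_\kappa(\q(t))=\infty$, a contradiction.

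The only genuinely delicate point, which is the same as in Lemma \ref{liminfminrec}, is the non-cancellation argument: one must verify that the terms of $U_\kappa$ whose denominators approach zero come with a definite sign, so they cannot cancel. This sign control is precisely what the exclusion of collision-antipodal configurations buys us, since it forbids any other pair from simultaneously sending $\kappa\q_k\odot\q_\ell$ toward $-1$ and producing a compensating divergence. Because of this reuse of machinery, I expect the cleanest presentation to be the one-line reduction in the first paragraph, with the self-contained version recorded only if the author wants parallelism with the proof of the previous theorem.
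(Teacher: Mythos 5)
Your first paragraph is exactly the paper's proof: Theorem \ref{limminrec} is stated there as an immediate consequence of Lemma \ref{liminfminrec}, since the limit hypothesis implies the liminf hypothesis. The self-contained argument you sketch in the second and third paragraphs correctly reproduces the proof of that lemma, so the whole proposal matches the paper's approach.
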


\begin{proof}
This result is an obvious consequence of Lemma \ref{liminfminrec}.
\end{proof}

\begin{remark}
As long as, for $\kappa>0$, solutions stay away from collision-antipodal singularities, 
the condition in Theorem 1 can be reduced to
$$\lim_{t\to t^*}\min_{1<i\le j<n}|\kappa{\bf q}_i\odot{\bf q}_j-1|=0,$$
and Theorems 1 and 2 become accurate translations of Painlev\'e's original 
result to spaces of constant curvature. For $\kappa<0$, the accurate translation
is satisfied without restrictions.
\label{away}
\end{remark}

\subsection{Remarks on the nature of singularities} In the classical 
$n$-body problem solutions can experience two kinds of singularities: collisions and pseudocollisions. The latter, which occur when the motion becomes
unbounded in finite time, have been conjectured by Painlev\'e for $n>3$, \cite{Pain}, and proved to exist for four or more bodies, \cite{Mather}, \cite{Xia}, \cite{Gerver}. We do not know whether such singularities show up in the curved $n$-body problem. The compactness of ${\bf S}_\kappa^2$ seems to exclude them for $\kappa>0$, but they may exist for $\kappa<0$. The difficulty of solving this problem is compounded by the lack of integrals of the center of mass, which played a crucial role in proving the existence of pseudocollisions in the classical case.

The collision-antipodal singularities of the curved problem raise another question. Are they due to the coordinates or the potential? The physical remarks below, point at the potential. Moreover, shifting the center of the coordinate 
system away from the center of the sphere does not remove these
singularities. But, of course, this doesn't exclude the possibility of
finding singularity-free coordinates in the future.


\subsection{Some physical remarks} The antipodal and the collision-anti\-podal singularities seem to obstruct the natural translation of the dynamical properties 
of the $n$-body problem from $\kappa=0$ to $\kappa> 0$. To better understand this
issue, let us first compare how the force function and its gradient vary in the Euclidean and in the curved case.

Let us start with the 2-body problem. The Euclidean force function,
$U_0({\bf q})={m_1m_2/|{\bf q}_1-{\bf q}_2|}$, is infinite at collision and
tends to zero when the distance between bodies tends to infinity. The norm
of the gradient,
$|\nabla U_0({\bf q})|$, has a similar behavior, which agrees with our perception that
the gravitational force decreases when the distance increases.

The behavior of the curved force function \eqref{forcef} and the norm of its gradient 
\eqref{gradient}, however, depend on the sign of $\kappa$. For $\kappa<0$, things 
are as in the Euclidean case. For $\kappa>0$ let's assume that one body is fixed at the north pole. Then $U_\kappa$, ranges from $+\infty$ at collision to $-\infty$ at the antipodal configuration, with $0$ when the second body is on the equator. The norm of the gradient is $+\infty$ at collision, and the smaller the farther the second body lies from collision in the northern hemisphere; it takes a positive minimum value on the equator, and is the larger the farther the second body stays from the north pole while lying in the southern hemisphere; finally, the norm of the gradient becomes $+\infty$ when the two bodies are at antipodes.

This behavior of the gradient seems to agree with our understanding of
gravitation only when the second body doesn't leave the northern hemisphere,
but not after it passes the equator. In a spherical universe 
with billions of objects ejected from a Big-Bang that took place at the north pole,  all the bodies would now still be in the northern hemisphere. But when the boundary of the expanding system approaches the equator, many bodies come close to antipodal singularities, so the potential energy becomes positive, thus
having the same sign as the kinetic energy. Then, by the integral of energy, the potential energy cannot grow beyond the value of the energy constant, which, when reached, makes the kinetic energy zero. Consequently the system stops moving. The motion then reverses from expansion to contraction, in agreement with the cosmological scenario of general relativity. So in a highly populated spherical universe, the motion is contained in the northern hemisphere, away from the equator. Therefore we can restrict the study of the case $\kappa>0$ to a hemisphere without equator and ignore solutions that reach collision-antipodal singularities.


\section{The orthogonal system}\label{ort}

The above physical remarks show that while the natural setting of the case $\kappa<0$ is the entire upper sheet of the hyperboloid, we can restrict the study of the case $\kappa>0$ to the northern hemisphere. The rest of this paper is about the positive-curvature case. 

We further introduce the equations of the orthogonal projection\footnote{In cartography, this projection is called orthographic, and was already mentioned by Hipparchus in the 2nd century B.C.}, which we will henceforth call the orthogonal system, and will show that, under certain circumstances, there is a one-to-one correspondence between its total-collision solutions and those of the original equations of motion. The orthogonal system, which will help us understand certain properties of the original equations, has the advantage of being defined in an Euclidean disk, though not endowed with the standard Euclidean distance. 

Let us also mention that the Principal Axis Theorem (see \cite{Diacu1})
allows us to use the orthogonal transformations of the sphere to keep, in some suitable basis, the original form of the equations of motion. Therefore, without loss of generality, 
we can always apply the orthogonal projection on the $xy$ plane. Similarly, we introduce 
no restrictions by assuming that the total collision we study in the next sections takes place at the north pole, $(0,0,\kappa^{-1/2})$, of ${\bf S}^2_\kappa$. 

Let $\bar{\bf q}_i=(x_i,y_i)$ be the orthogonal projection of ${\bf q}_i=(x_i,y_i,z_i)$
with the constraint $\kappa{\bf q}_i\cdot{\bf q}_i=1, \ i=1,\dots, n$, on the $xy$ plane. The momenta of the projected variables are $\bar{\bf p}_i=m_i\dot{\bar{\bf q}}_i$, and the problem is restricted to the disk $\kappa\bar{\bf q}_i\cdot\bar{\bf q}_i\le 1$. Then we obtain the orthogonal system associated to equations \eqref{Ham} by dropping the $z_i$ variables from
these equations. In other words, the orthogonal system has the form
\begin{equation}
\begin{cases}
{\dot{\bar{\bf q}}_i}=m_i^{-1}\bar{\bf p}_i,\cr
{\dot{\bar{\bf p}}_i}=\nabla_{\bar{\bf q}_i}U_\kappa(\bar{\bf q})-m_i^{-1}\kappa(
\bar{\bf p}_i\cdot\bar{\bf p}_i)\bar{\bf q}_i, \ \  i=1,\dots,n, \ \kappa > 0.
\label{Hamo}
\end{cases}
\end{equation}

The orthogonal transformation, however, introduces new (artificial) singularities in
system \eqref{Hamo}. They occur when two or more bodies reach the same diameter of the disk, a position for which at least one denominator in $\nabla_{\bar{\bf q}_i}U_\kappa(\bar{\bf q})$ vanishes. We call this a {\it diameter singularity} of system \eqref{Hamo}. In terms of \eqref{Ham}, 
diameter singularities correspond to non-singular configurations  
for which two or more bodies reach one of the geodesics that pass through the north 
pole  $(0,0,k^{-1/2})$. We call them {\it pole-geodesic configurations} of system \eqref{Ham}. Since  we are concerned with total-collision 
orbits, these artificial singularities force us to exclude from our treatment only a negligible set, as we will further see. 

Consider now the set $\mathscr C$ of solutions of system \eqref{Ham} that
encounter a total collision at time $t^*$ at the north pole, and are free of pole-geodesic
configurations in some interval $[t_0, t^*)$, where $t_0$ is solution dependent. 
Then $\mathscr C$ contains all total-collision orbits of system \eqref{Ham}, except for a lower-dimensional set. Indeed, each solution is analytic in $[t_0,t^*)$. So should the set of pole-geodesic configurations of a solution of system \eqref{Ham} have an accumulation point, then by the identity theorem of analytic functions the entire solution must be pole-geodesic, i.e.\ two or more bodies move all the time such that, at every instant, they lie on the same geodesic passing through the north pole. These constraints define the lower-dimensional set we must exclude from $\mathscr C$.

Similarly, we consider the set $\mathscr{\bar C}$ of solutions of system \eqref{Hamo} that
encounter a total collision at time $t^*$ at the origin of the disk, and are defined on
some interval $[t_0, t^*)$, with $t_0$ solution dependent. In other
words, the orbits of $\mathscr{\bar C}$ do not encounter diameter singularities
in $[t_0, t^*)$.

We can now state the following result, which proves the equivalence between the
sets $\mathscr C$ and $\mathscr{\bar C}$.

\medskip


\noindent{\bf Equivalence Lemma.}
{\it Consider the set $\mathscr C$ of solutions  
$({\bf q},{\bf p})$ of system \eqref{Ham} that encounter a total collision at time $t^*$ and 
are free of pole-geodesic configurations in some interval $[t_0,t^*)$. Also consider the set  
$\mathscr{\bar C}$ of solutions $(\bar{\bf q},\bar{\bf p})$ of the orthogonal system \eqref{Hamo} that encounter a total collision at time $t^*$ and are free of diameter singularities in the interval $[t_0,t^*)$. Then there is a one-to-one correspondence 
between $\mathscr C$ and $\mathscr{\bar  C}$ such that, for corresponding orbits, 
$(\bar{\bf q},\bar{\bf p})(t)$ is the orthogonal projection of $({\bf q},{\bf p})(t)$
for every $t$ in $[t_0,t^*)$.}

\begin{proof}
This equivalence follows from the fact that the orthogonal projection is a real analytic diffeomorphism between the hemisphere $z>0$ of ${\bf S}_\kappa^2$ and the disk of radius $\kappa^{-1/2}$. The form of the projection proves the last statement of this lemma.
Notice that the identification of a solution  $({\bf q},{\bf p})$ of system \eqref{Ham} with
a solution $(\bar{\bf q},\bar{\bf p})$ of system \eqref{Hamo} takes place in the interval
$[t_0,t^*)$, assumed to have no pole-geodesic configurations for $({\bf q},{\bf p})$
and, consequently, no diameter singularities for $(\bar{\bf q},\bar{\bf p})$.
\end{proof}


\begin{cor}
Consider a solution $({\bf q},{\bf p})$ of the equations of motion \eqref{Ham} as in
the Equivalence Lemma
and the corresponding solution $(\bar{\bf q},\bar{\bf p})$ of the orthogonal system \eqref{Hamo}. Then if ${\bf c}=(\alpha,\beta,\gamma)$ is the constant vector of
the total angular momentum for $({\bf q},{\bf p})$, the constant vector of the total angular
momentum corresponding to $(\bar{\bf q},\bar{\bf p})$ has the form $\bar{\bf c}=(0,0,\gamma)$.
\label{ang} 
\end{cor}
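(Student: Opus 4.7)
The plan is to read off both angular momenta in coordinates and invoke the Equivalence Lemma. Because $\kappa>0$ forces $\sigma=+1$, the product $\otimes$ reduces to the standard Euclidean cross product, so writing ${\bf q}_i=(x_i,y_i,z_i)$ and ${\bf p}_i=(p_{x,i},p_{y,i},p_{z,i})$, the third component of ${\bf c}=\sum_{i=1}^n{\bf q}_i\otimes{\bf p}_i$ is
\[
\gamma=\sum_{i=1}^n(x_ip_{y,i}-y_ip_{x,i}),
\]
an expression that involves only the first two coordinates of the position and momentum vectors.

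By the Equivalence Lemma, $(\bar{\bf q}_i,\bar{\bf p}_i)(t)$ is the orthogonal projection of $({\bf q}_i,{\bf p}_i)(t)$ throughout $[t_0,t^*)$; concretely $\bar{\bf q}_i=(x_i,y_i)$, and from $\bar{\bf p}_i=m_i\dot{\bar{\bf q}}_i$ together with ${\bf p}_i=m_i\dot{\bf q}_i$ one reads off $\bar{\bf p}_i=(p_{x,i},p_{y,i})$. Embedding $\bar{\bf q}_i$ and $\bar{\bf p}_i$ in $\mathbb{R}^3$ with a vanishing third coordinate, each cross product $\bar{\bf q}_i\times\bar{\bf p}_i$ equals $(0,0,\,x_ip_{y,i}-y_ip_{x,i})$, and summing over $i$ yields $\bar{\bf c}=(0,0,\gamma)$, as asserted.

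Finally I would confirm that $\bar{\bf c}$ is a genuine integral of the orthogonal system \eqref{Hamo}, rather than just a constant that happens to equal $\gamma$. Differentiating $\sum_i\bar{\bf q}_i\times\bar{\bf p}_i$ and using $\dot{\bar{\bf q}}_i=m_i^{-1}\bar{\bf p}_i$ together with the second equation of \eqref{Hamo}, the computation reduces to $\sum_i\bar{\bf q}_i\times\nabla_{\bar{\bf q}_i}U_\kappa(\bar{\bf q})$, which vanishes because $U_\kappa$ is invariant under joint rotations of the $\bar{\bf q}_i$ in the $xy$ plane---an invariance inherited from the invariance of the original $U_\kappa$ under rotations of ${\bf S}^2_\kappa$ about the $z$-axis. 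Apart from this routine verification, the only obstacle is clerical: carefully aligning the identifications $\bar{\bf p}_i\leftrightarrow(p_{x,i},p_{y,i})$ and adopting the convention that the planar angular momentum is viewed as the 3D vector whose first two components vanish.
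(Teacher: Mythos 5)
Your proposal is correct and follows essentially the same route as the paper: write out the components of ${\bf c}$, observe that $\gamma$ involves only the $x$ and $y$ coordinates, and read off $\bar{\bf c}=(0,0,\gamma)$ from the projected vectors $\bar{\bf q}_i=(x_i,y_i,0)$, $\bar{\bf p}_i=(m_i\dot x_i,m_i\dot y_i,0)$. Your closing verification that $\bar{\bf c}$ is conserved by the orthogonal system is a reasonable extra check the paper omits (and is in any case automatic, since the third component coincides with the already-constant $\gamma$), but it does not change the argument.
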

\begin{proof}
The integrals of the total angular momentum for the solution $({\bf q},{\bf p})$ of
equations \eqref{Ham} are
$$
\big(\sum_{i=1}^nm_i(y_i\dot{z}_i-z_i\dot{y}_i),
\sum_{i=1}^nm_i(z_i\dot{x}_i-x_i\dot{z}_i), \sum_{i=1}^nm_i(x_i\dot{y}_i-y_i\dot{x}_i)\big)=(\alpha,\beta,\gamma).
$$
We can write the vectors $\bar{\bf q}_i$ and $\bar{\bf p}_i$ of the solution $(\bar{\bf q},\bar{\bf p})$ as
$$
\bar{\bf q}_i=(x_i,y_i,0)\ \ \ {\rm and} \ \ \  \bar{\bf p}_i=(m_i\dot{x}_i,m_i\dot{y}_i,0),
$$ 
so the integrals of the total angular momentum for the solution $(\bar{\bf q},\bar{\bf p})$ of the orthogonal system \eqref{Hamo} have the form
$$
\big(0, 0, \sigma\sum_{i=1}^nm_i(x_i\dot{y}_i-y_i\dot{x}_i)
\big)=(0,0,\gamma)=:\bar{\bf c},
$$
a fact which completes the proof.
\end{proof}


\section{Total collisions}

In this section, we will use the Equivalence Lemma to generalize a theorem by Weierstrass and Sundman to spaces of positive constant curvature, first in two dimensions and
then in three dimensions.


\subsection{The 2-dimensional case}

We can now generalize a result known to Weierstrass, as communicated in a letter to 
G\"osta Mittag-Leffler in 1889 (\cite{Mit}, p.~58). Its proof was independently obtained and published by Sundman about two decades later, \cite{Sun}. The difference between this classical result and our generalization is that while in
the classical case all three components of the constant vector of the total angular momentum are zero, in the curved $n$-body problem only one component must vanish. This happens because in the latter case there are no integrals of the center of mass, so the entire system may drift in ${\bf S}^2_\kappa$ before the total collapse, thus making two of the components nonzero.


\begin{theorem}
If $({\bf q},{\bf p})$ is a total-collision analytic solution of equations \eqref{Ham} as in the Equivalence Lemma, the constant total angular momentum vector, ${\bf c}=(\alpha,\beta,\gamma)$, has $\gamma=0$.
\label{angmom}
\end{theorem}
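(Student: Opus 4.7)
The plan is to use the Equivalence Lemma to reduce the claim to a Weierstrass--Sundman-type analysis in the Euclidean disk. The hypotheses guarantee that the total-collision solution $({\bf q},{\bf p})$ corresponds, under orthogonal projection, to a solution $(\bar{\bf q},\bar{\bf p})$ of the orthogonal system \eqref{Hamo} that experiences total collision at the origin of the disk at time $t^*$. By Corollary \ref{ang}, the constant angular momentum vector of the projected orbit is $(0,0,\gamma)$, so it is enough to prove that this single surviving scalar $\gamma$ vanishes.

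First I would introduce the moment of inertia of the projected configuration about the collision point,
\[
I(t)=\sum_{i=1}^{n}m_i\,\bar{\bf q}_i(t)\cdot\bar{\bf q}_i(t),
\]
and observe that $I(t)\to 0$ as $t\to t^{*}$. The Lagrange/Cauchy--Schwarz inequality applied to $\gamma=\sum_i m_i(x_i\dot y_i-y_i\dot x_i)$ then gives
\[
\gamma^{2}\le\Bigl(\sum_{i=1}^{n}m_i|\bar{\bf q}_i|^{2}\Bigr)\Bigl(\sum_{i=1}^{n}m_i|\dot{\bar{\bf q}}_i|^{2}\Bigr)=2\,I\,T_{\mathrm{eu}},
\]
where $T_{\mathrm{eu}}=\tfrac12\sum_i m_i|\dot{\bar{\bf q}}_i|^{2}$ is the Euclidean kinetic energy of the projected motion. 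The task is thus reduced to showing that $I\,T_{\mathrm{eu}}\to 0$ as $t\to t^{*}$.

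To get this decay I would follow the classical Weierstrass--Sundman strategy: expand $U_\kappa(\bar{\bf q})$ near the origin of the disk and check that its leading singular behaviour at a total collision coincides with that of the Newtonian potential, so that $U_\kappa\sim C\,I^{-1/2}$ as $t\to t^{*}$; use the energy integral of \eqref{Hamo} to conclude that $T_{\mathrm{eu}}$ is of the same order as $U_\kappa$; and derive a Lagrange--Jacobi identity for the orthogonal system (taking careful account of the extra term $-m_i^{-1}\kappa(\bar{\bf p}_i\cdot\bar{\bf p}_i)\bar{\bf q}_i$) that yields, via a Sundman-type integration, the Wintner asymptotics $I(t)\sim C(t^{*}-t)^{2/3}$. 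Combining these gives $I\,T_{\mathrm{eu}}\sim(t^{*}-t)^{1/3}\to 0$, forcing $\gamma=0$.

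The main obstacle is controlling the curvature corrections uniformly as the bodies approach the pole: both the denominators in $U_\kappa$ and the velocity-coupled term in \eqref{Hamo} must be shown to contribute only lower-order perturbations of the Newtonian picture, so that the classical Wintner--Sundman estimates survive in the present setting. Once this is established, the two-dimensional version of the Weierstrass--Sundman theorem transfers to the orthogonal system, and the vanishing of $\gamma$ follows from the Cauchy--Schwarz bound above.
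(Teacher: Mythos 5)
Your setup coincides with the paper's: project via the Equivalence Lemma, invoke Corollary \ref{ang} so that only the third component $\gamma$ survives, and bound $\gamma^2$ by $I\cdot\sum_i m_i|\dot{\bar{\bf q}}_i|^2$ through the Lagrange/Cauchy--Schwarz identity (the paper uses the slightly weaker constant $\rho=\gamma^2/n$). The gap is in how you propose to finish. You reduce to showing $I\,T_{\mathrm{eu}}\to0$ and want to get this from the Newtonian total-collapse asymptotics $U_\kappa\sim C I^{-1/2}$ and $I\sim C(t^*-t)^{2/3}$, but neither is available here. The projected force function $U_\kappa(\bar{\bf q})$ is homogeneous of degree \emph{zero}, not $-1$: by Euler's theorem $\bar{\bf q}_i\cdot\nabla_{\bar{\bf q}_i}U_\kappa(\bar{\bf q})=0$, so the Lagrange--Jacobi identity for \eqref{Hamo} reads $\ddot I=2\sum_i m_i|\dot{\bar{\bf q}}_i|^2\,[1-\kappa|\bar{\bf q}_i|^2]$ with \emph{no} potential term at all. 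The classical derivation of the power law (which in any case gives $I\sim C(t^*-t)^{4/3}$, not $(t^*-t)^{2/3}$) rests on $\ddot I=2U+4h$ and therefore does not transfer. Moreover $U\sim CI^{-1/2}$ is only a one-sided lower bound even in the Newtonian setting --- a pair may approach far faster than the overall collapse rate --- so the chain $T_{\mathrm{eu}}\sim U_\kappa\lesssim I^{-1/2}$ breaks. What you defer as ``the main obstacle'' is in fact the entire content of the proof, and the route you sketch would not close it.

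The repair is to aim for no asymptotics at all and run Sundman's differential-inequality argument, which is what the paper does. From the degree-zero homogeneity, $\ddot I=2\sum_i m_i|\dot{\bar{\bf q}}_i|^2[1-\kappa|\bar{\bf q}_i|^2]>0$ in the open disk, and since $|\bar{\bf q}_i|\to0$ at the total collision one has $\ddot I\ge\sum_i m_i|\dot{\bar{\bf q}}_i|^2$ on some $[\tau,t^*)$; because $I\to0$ and $\ddot I>0$, necessarily $\dot I<0$ there. Your own Cauchy--Schwarz bound then yields $\ddot I\ge 2\rho I^{-1}$ with $\rho=\gamma^2/n$. If $\gamma\ne0$, multiply by $-2\dot I>0$ and integrate from $\tau$ to $t$ to get $4\rho\ln I(t)\ge 4\rho\ln I(\tau)+\dot I^2(t)-\dot I^2(\tau)$, hence $I(t)\ge I(\tau)\,e^{-\dot I^2(\tau)/4\rho}$, so $I$ stays bounded away from zero, contradicting the total collision. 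This uses only the homogeneity of $U_\kappa$, the energy integral, and the angular momentum integral, and requires none of the curvature-correction estimates your plan depends on.
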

\begin{proof}
We will use the orthogonal projection discussed in Section \ref{ort}. Since 
the orthogonal system describes a planar motion, the constant of the
total angular momentum is a 3-vector with two zero components. The
third component is exactly the constant $\gamma$ of the original system, 
as shown in Corollary \ref{ang}. By the Equivalence Lemma it is enough to prove that, for total collision solutions of the orthogonal system, $\gamma=0$. 

Recall that $\bar{\bf q}_i:=(x_i,y_i)$ and $\bar{\bf p}_i=m_i{\dot{\bar{\bf q}}_i}, \ i=1,\dots, n$. Then the orthogonal projection of the equations of motion, the energy integral, and the integrals of the angular momentum are obtained by replacing ${\bf q}_i$ and ${\bf p}_i$ with $\bar{\bf q}_i$ and $\bar{\bf p}_i$, respectively. The last term of the inner product ${\bf a}\cdot{\bf b}=a_xb_x+a_yb_y+a_zb_z$ vanishes, so this operation becomes the standard 2-dimensional inner product ${\bf a}\cdot{\bf b}=a_xb_x+a_yb_y$, which is well defined 
in the closed disk of radius $\kappa^{-1/2}$. The gradient operator is now the standard
$\nabla$ with $\nabla_{\bar{\bf q}_i}=(\partial_{x_i},\partial_{y_i})$. Also, $U_\kappa(\bar{\bf q})$ remains a homogeneous
function of degree zero, where $\bar{\bf q}=(\bar{\bf q}_1,\dots, \bar{\bf q}_n)$.
Consequently, Euler's theorem for homogenous functions leads us to the identity
(a proof of which can be found in \cite{Diacu1}):
\begin{equation}
\bar{\bf q}_i\cdot\nabla_{\bar{\bf q}_i} U_\kappa(\bar{\bf q})=0, \ i=1,\dots, n.
\label{euler2}
\end{equation}

Consider now the moment of inertia $I({\bf q})=\sum_{i=1}^nm_i(x_i^2+y_i^2)$
of system \eqref{Ham}, as defined in \cite{Diacu1}. Notice that $I(\bar{\bf q})=I({\bf q})$, where  $I(\bar{\bf q})$
represents the moment of inertia after the orthogonal projection. Then
\begin{multline*}
\ddot{I}=2\sum_{i=1}^nm_i(\dot{x}_i^2+\dot{y}_i^2)+2\sum_{i=1}^n(x_i
\partial_{x_i}U_\kappa(\bar{\bf q})+y_i\partial_{y_i}U_\kappa(\bar{\bf q}))\\
-2\sum_{i=1}^nm_i\kappa(\dot{x}_i^2+\dot{y}_i^2)(x_i^2+y_i^2).
\end{multline*}
But from \eqref{euler2},
$x_i\partial_{x_i}U_\kappa(\bar{\bf q})+y_i\partial_{y_i}U_\kappa(\bar{\bf q})=
\bar{\bf q}_i\cdot\nabla_{\bar{\bf q}_i} U_\kappa(\bar{\bf q})=0,$
so
\begin{equation}
\ddot{I}=2\sum_{i=1}^nm_i(\dot{x}_i^2+\dot{y}_i^2)[1-\kappa(x_i^2+y_i^2)].
\label{eq}
\end{equation}
This relationship implies that $\ddot{I}>0$ for all $t$ in some interval $[t_0, t^*)$
before the total collision, where $t_0$ is sufficiently close to $t^*$, therefore $\dot{I}$ is increasing in
this interval. We can also assume that $\dot{I}$ is either positive or negative,
for if it changed sign at some point $t_1$, we could restrict our analysis to
an interval $[t_2,t^*)$ with $t_2>t_1$ in which the sign stays the same. But if $\dot{I}$ were positive, $I$
would increase as $t\to t^*$, so the limit of $I$ could not be zero at $t^*$,
and the total singularity would not take place. Therefore $\dot{I}<0$ for
all $t$ in $[t_0,t^*)$.

On the right hand side of the identity of Lagrange, 
\begin{equation}
\Big(\sum_{k=1}^ma_k^2\Big)\Big(\sum_{k=1}^mb_k^2\Big)=\Big(\sum_{k=1}^ma_kb_k\Big)^2+
\sum_{1\le k<j\le m}(a_kb_j-a_jb_k)^2,
\label{Lag}
\end{equation}
we ignore the first squared term, take $m=2n$, $a_{2k-1}=
m_k^{1/2}x_k$, $a_{2k}=m_k^{1/2}y_k$, $b_{2k-1}=m_k^{1/2}\dot{x}_k$, and
$a_{2k}=m_k^{1/2}\dot{y}_k$ for  $k=1,\dots, n$, and choose in the last sum only the terms that have the same index. Then the identity becomes the inequality
\begin{equation}
\Big[\sum_{i=1}^nm_i(x_i^2+y_i^2)\Big]\Big[\sum_{i=1}^nm_i(\dot{x}_i^2+\dot{y}_i^2)\Big]\ge\sum_{i=1}^nm_i^2(x_i\dot{y}_i-\dot{x}_iy_i)^2.
\label{fromLagr}
\end{equation}
But notice that
$$
\rho:={1\over n}\Big[\sigma\sum_{i=1}^nm_i(x_i\dot{y}_i-\dot{x}_iy_i)\Big]^2\le\sum_{i=1}^nm_i^2(x_i\dot{y}_i-\dot{x}_iy_i)^2,
$$
where, obviously, $\rho=\gamma^2/n$, therefore inequality \eqref{fromLagr} implies that
\begin{equation}
I\sum_{i=1}^nm_i(\dot{x}_i^2+\dot{y}_i^2)\ge\rho.
\label{inequality}
\end{equation}

So if we can prove that $\rho=0$, then $\gamma=\sigma \sum_{i=1}^nm_i(x_i\dot{y}_i-\dot{x}_iy_i)$ must also vanish. Assume $\rho>0$. Since,
as $t\to t^*$, a total collision takes place, $\kappa(x_i^2+y_i^2)\to 0$ 
for all $i=1,\dots,n$, so necessarily $U_\kappa(\bar{\bf q})\to\infty$. We can thus conclude from the energy relation that there exists at least an integer $i$ between 1 and $n$ for which $(\dot{x}_i^2+
\dot{y}_i^2)\to\infty$, therefore $\sum_{i=1}^n(\dot{x}_i^2+\dot{y}_i^2)\to\infty$ as $t\to t^*$. Then from identity \eqref{eq}, we have
$$
\ddot{I}\ge\sum_{i=1}^nm_i(\dot{x}_i^2+\dot{y}_i^2)
$$
in some the interval $[\tau,t^*)$, with $\tau$ sufficiently close to $t^*$. This 
inequality and \eqref{inequality} imply that in the same interval we
can write the inequality
$$ 
\ddot{I}\ge 2\rho I^{-1}.
$$
Multiplying this relationship by $-2\dot{I}>0$ and integrating the ensuing
inequality between $\tau$ and $t$, with $t$ in $(\tau,t^*)$, we obtain that
$$
4\rho\ln I(t)\ge 4\rho\ln I(\tau)+\dot{I}^2(t)-\dot{I}^2(\tau),
$$
in which, for simplicity, we used the notation $I(t):=I(\bar{\bf q}(t))$. 
This relationship implies that
$$
4\rho\ln I(t)\ge 4\rho\ln I(\tau)-\dot{I}^2(\tau).
$$
Then
$$
I(t)\ge I(\tau)e^{-\dot{I}^2(\tau)/4\rho},
$$
an inequality from which we can conclude that $I$ is bounded from below
since $\dot I(\tau)$ is finite and we assumed $\rho>0$. But then the total
collision cannot take place. Therefore $\rho$ must be zero, and the conclusion 
of the theorem follows. \end{proof}

\begin{remark}
Notice that in the proof of Theorem \ref{angmom} we used only the fact
that the potential is a homogeneous function of degree zero and that
the equations of motion possess the integrals of the angular momentum.
In fact the former condition is not necessary as long as one can
find another way to show that $\ddot I>0$ in a neighborhood of $t^*$. 
\end{remark}

\begin{remark}
In the Euclidean case, a stronger version of Theorem \ref{angmom} is true, \cite{Siegel}. The proof, however, uses the integrals of the center of mass, which don't exist in the curved $n$-body problem.

\end{remark}


\subsection{The 3-dimensional case}\label{3dim}

We are now focusing on the curved $n$-body problem in the hemisphere
$z>0$ of ${\bf S}_\kappa^3$.
With two exceptions, all the concepts introduced in Section \ref{equations} generalize naturally to the $3$-dimensional case. More precisely, the position vectors of the masses $m_i$ have the form ${\bf q}_i=(u_i,x_i,y_i,z_i)$, $i=1,\dots,n$, the gradient becomes
$$
\nabla_{{\bf q}_i}=(\partial_{u_i}, \partial_{x_i},\partial_{y_i}, \partial_{z_i}),
$$
and the inner product is defined as
$$
{\bf a}\cdot{\bf b}:=(a_ub_u+a_xb_x+a_yb_y+a_zb_z)
$$
for the vectors ${\bf a}=(a_u,a_x,a_y,a_z)$ and ${\bf b}=(b_u,b_x,b_y,b_z)$.
Since, in Section \ref{equations}, we expressed the kinetic energy, the force function, its gradient, the Hamiltonian, the equations of motion, and the integral of energy in terms of ${\bf q}$ and ${\bf p}$, their formal expressions stay the same. The only concept we cannot naturally extend to ${\mathbb R}^4$ is the cross product
and, consequently, the integrals of the angular momentum. To bypass this
difficulty, we will use the idea described in Section \ref{ort} of working with
the orthogonal system instead of the original equations of motion. 

So let ${\bar{\bf q}_i}=(u_i,x_i,y_i)$ be the orthogonal projection onto the hyperplane
$uxy$ of the position vector ${\bf q}_i=(u_i,x_i,y_i,z_i)$, $i=1,\dots,n$. 
The vectors ${\bf q}_i$ are defined in the solid ball of radius $\kappa^{-1/2}$ in ${\mathbb R}^3$. Equations \eqref{Hamo} then describe the motion of total-collision solutions
in terms of the orthogonal system of $n$ bodies. 
The Equivalence Lemma holds in this case too by replacing ${\bf S}^2_\kappa$
with ${\bf S}^3_\kappa$, and the 2-dimensional disk of radius $\kappa^{-1/2}$ with
the 3-dimensional ball of the same radius. 

The pole-geodesic configurations excluded from the set 
$\mathscr C$ of total collision orbits of system \eqref{Ham} occur when two 
or more bodies are on a geodesic passing through the ``north pole,'' $(0,0,0,\kappa^{-1/2})$, of ${\bf S}_\kappa^3$, where the 3-dimensional sphere is seen as a manifold embedded in ${\mathbb R}^4$. And the diameter singularities of the total-collision solutions of system
\eqref{Hamo}, excluded from $\mathscr{\bar C}$, show up when two or more bodies are on the same diameter passing through the
center of the solid ball of radius $\kappa^{-1/2}$. 

Since the orthogonal system is defined in 
the solid ball of radius $\kappa^{-1/2}$ of $\mathbb{R}^3$, we can use the standard cross product given by
$$
{\bf a}\times{\bf b}:=(a_xb_y-a_yb_x, a_yb_u-a_ub_y, 
a_ub_x-a_xb_u)
$$
for the 3-dimensional vectors ${\bf a}=(a_u,a_x,a_y)$ and ${\bf b}=(b_u,b_x,b_y)$. 
Thus, we obtain for the orthogonal system \eqref{Hamo} the three integrals of
the total angular momentum
\begin{equation}
\sum_{i=1}^n{\bar{\bf q}_i}\times{\bar{\bf p}_i}=(\bar\alpha,\bar\beta,\bar\gamma).
\label{angmomentum}
\end{equation}
To prove equations \eqref{angmomentum}, notice first that
$$
\sum_{i=1}^n\nabla_{\bar{\bf q}_i}U_\kappa(\bar{\bf q})\times{\bar{\bf q}}_i
=\sum_{i=1}^n\sum_{j=1,j\ne i}^nA_{ij}{\bar{\bf q}}_i\times{\bar{\bf q}}_j
+ \sum_{i=1}^n B_{ij} {\bar{\bf q}}_i\times{\bar{\bf q}}_i={\bf 0},
$$
where $A_{ij}$ and $B_{ij}$ are symmetric in $i$ and $j$, 
the last equality following from the skew-symmetry of the cross product. Then, if $\times$-multiplying $m_i\ddot{\bar{\bf q}}_i$ from \eqref{Hamo} by ${\bar{\bf q}}_i$ and adding for all $i$ from $1$ to $n$, we obtain
\begin{multline*}
\sum_{i=1}^nm_i{\ddot{\bar{\bf q}}_i}\times{\bar{\bf q}}_i=\sum_{i=1}^n\nabla_{\bar{\bf q}_i}U_\kappa(\bar{\bf q})\times{\bar{\bf q}}_i-\sum_{i=1}^nm_i^{-1}\kappa(\bar{\bf p}_i\cdot\bar{\bf p}_i)\bar{\bf q}_i\times{\bar{\bf q}}_i={\bf 0}.
\end{multline*}
Integrals \eqref{angmomentum} follow by integrating the identity $\sum_{i=1}^nm_i{\ddot{\bar{\bf q}}_i}\times{\bar{\bf q}}_i={\bf 0}$.

We can now generalize Theorem \ref{angmom} to three dimensions. For the same reasons mentioned in Section \ref{ort}, we can assume without loss of generality that the total collision takes place in ${\bf S}^3_\kappa$ at $(0,0,0,\kappa^{-1/2}$), the ``north pole'' of the 3-sphere. 
We will show that for a total collision solution of the original system in ${\bf S}^3_\kappa$, all three constants of the total angular momentum of the orthogonal system must vanish.


\begin{theorem}
Let $({\bf q},{\bf p})$ be a total-collision analytic solution of equations \eqref{Ham} as in the Equivalence Lemma, and let  $(\bar{\bf q},\bar{\bf p})$ be the corresponding solution of the orthogonal system \eqref{Hamo}. Then all three components $\bar\alpha, \bar\beta$, and $\bar\gamma$ of the constant total angular momentum vector belonging to the solution $(\bar{\bf q},\bar{\bf p})$  are zero.
\label{angmom3}
\end{theorem}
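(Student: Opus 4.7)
The plan is to adapt the proof of Theorem \ref{angmom} almost verbatim to three dimensions, with the coordinates $(u_i,x_i,y_i)$ of the orthogonal system playing the role that $(x_i,y_i)$ played in the planar case. By the Equivalence Lemma (in its three-dimensional version established earlier in this subsection), it suffices to work with the orthogonal system \eqref{Hamo} on the solid ball $\kappa\bar{\bf q}_i\cdot\bar{\bf q}_i\le 1$ in $\mathbb{R}^3$, where the ordinary cross product makes sense and the total angular momentum $(\bar\alpha,\bar\beta,\bar\gamma)$ is defined by \eqref{angmomentum}.

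First I would note that $U_\kappa(\bar{\bf q})$ remains homogeneous of degree zero in each $\bar{\bf q}_i$, so Euler's theorem still yields $\bar{\bf q}_i\cdot\nabla_{\bar{\bf q}_i}U_\kappa(\bar{\bf q})=0$ for each $i$. With the moment of inertia $I(\bar{\bf q})=\sum_{i=1}^n m_i(u_i^2+x_i^2+y_i^2)$, a direct computation of $\ddot I$ using \eqref{Hamo} and this identity reproduces the exact 3D counterpart of \eqref{eq}, namely
\[
\ddot I=2\sum_{i=1}^n m_i(\dot u_i^2+\dot x_i^2+\dot y_i^2)\bigl[1-\kappa(u_i^2+x_i^2+y_i^2)\bigr].
\]
As in the planar case, on some $[t_0,t^*)$ with $t_0$ close to $t^*$ the bracket is strictly positive (the bodies approach the center of the ball), so $\ddot I>0$ and the same monotonicity-plus-collision argument forces $\dot I<0$ throughout $[t_0,t^*)$.

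The crucial step is to control all three components of the angular momentum with a single application of Lagrange's identity \eqref{Lag}. I would take $m=3n$ and group the variables by body: for each $k=1,\dots,n$, set the triple $(m_k^{1/2}u_k, m_k^{1/2}x_k, m_k^{1/2}y_k)$ as three consecutive $a$-entries and the matching velocity triple as the $b$-entries. Discarding the $(\sum a_kb_k)^2$ term and keeping only the three intra-body pairs per $k$ in the cross sum yields
\[
I\sum_{i=1}^n m_i(\dot u_i^2+\dot x_i^2+\dot y_i^2)\ge\sum_{i=1}^n m_i^2\bigl[(x_i\dot y_i-y_i\dot x_i)^2+(y_i\dot u_i-u_i\dot y_i)^2+(u_i\dot x_i-x_i\dot u_i)^2\bigr].
\]
Applying the elementary bound $\bigl(\sum c_i\bigr)^2\le n\sum c_i^2$ componentwise to the three integrals \eqref{angmomentum} gives $\bar\alpha^2+\bar\beta^2+\bar\gamma^2\le n$ times the right-hand side, so with $\rho:=(\bar\alpha^2+\bar\beta^2+\bar\gamma^2)/n$ we arrive at $I\sum_i m_i(\dot u_i^2+\dot x_i^2+\dot y_i^2)\ge\rho$.

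The conclusion is then identical to the 2D proof. Assuming $\rho>0$, the divergence of $U_\kappa$ at total collision combined with the energy integral forces $\sum m_i(\dot u_i^2+\dot x_i^2+\dot y_i^2)\to\infty$; on a suitable interval $[\tau,t^*)$ the identity above yields $\ddot I\ge \sum m_i(\dot u_i^2+\dot x_i^2+\dot y_i^2)\ge 2\rho I^{-1}$; multiplying by $-2\dot I>0$ and integrating from $\tau$ to $t$ produces $I(t)\ge I(\tau)\exp\bigl(-\dot I^2(\tau)/4\rho\bigr)>0$, contradicting total collision. Hence $\rho=0$ and each of $\bar\alpha,\bar\beta,\bar\gamma$ vanishes. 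The main obstacle is essentially organizational: arranging the Lagrange identity so that all three antisymmetric-product terms appear simultaneously on the right-hand side. The $3n$-index grouping by body does this cleanly, and the rest of the argument is the same chain of estimates used in two dimensions.
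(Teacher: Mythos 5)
Your proposal is correct and follows essentially the same route as the paper: pass to the orthogonal system via the three-dimensional Equivalence Lemma, establish $\ddot I>0$ and $\dot I<0$ near $t^*$ from the degree-zero homogeneity of $U_\kappa$, apply Lagrange's identity with the $3n$-index grouping by body to get $I\sum_i m_i(\dot u_i^2+\dot x_i^2+\dot y_i^2)\ge\rho$ with $\rho=(\bar\alpha^2+\bar\beta^2+\bar\gamma^2)/n$, and run the Sundman-type differential inequality to force $\rho=0$. No substantive differences from the paper's argument.
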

\begin{proof}
The idea of the proof is the same as for Theorem \ref{angmom}.
Let us start by noticing that, according to the 3-dimensional version of the
Equivalence Lemma, if $({\bf q},{\bf p})$ has no collision singularities or pole-geodesic
configurations in the interval $[t_0,t^*)$, but experiences a total collision
at the point $(0,0,0,\kappa^{-1/2})$ of ${\bf S}^3_\kappa$ at time $t^*$, 
then $(\bar{\bf q},\bar{\bf p})$ ends in a total collision at time $t^*$ at the origin of ${\mathbb R}^3$, and vice versa. We can thus define the moment of inertia $\bar I=\sum_{i=1}^n m_i(u_i^2+x_i^2+y_i^2)$ and use the same steps as in the proof of
Theorem \ref{angmom} to prove that
$$
\ddot{\bar I}=2\sum_{i=1}^nm_i(\dot{u}_i^2+\dot{x}_i^2+\dot{y}_i^2)[1-\kappa
(u_i^2+x_i^2+y_i^2)],
$$
in order to have the inequality $\dot{\bar I}<0$ in some interval $[t_0,t^*)$, with $t_0>0$. In the identity of Lagrange, \eqref{Lag}, we ignore the first squared sum of the right
hand side, take $m=3n$, $a_{3k-2}=m_k^{1/2}u_k$,
$a_{3k-1}=m_k^{1/2}x_k$, $a_{3k}=m_k^{1/2}y_k$,
$b_{3k-2}=m_k^{1/2}\dot{u}_k$, $b_{3k-1}=m_k^{1/2}\dot{x}_k$, and
$b_{3k}=m_k^{1/2}\dot{y}_k$ for $k=1,\dots,n$, and pick from the second 
sum of the right hand side only the terms that have the same index. Then the identity turns into the inequality
\begin{multline}
\Big[\sum_{i=1}^nm_i(u_i^2+x_i^2+y_i^2)\Big]\Big[\sum_{i=1}^nm_i(
\dot{u}_i^2+\dot{x}_i^2+\dot{y}_i^2)\Big]\ge\\
\sum_{i=1}^nm_i^2[(x_i\dot{y}_i-\dot{x}_iy_i)^2+(y_i\dot{u}_i-\dot{y}_iu_i)^2+
(u_i\dot{x}_i-\dot{u}_ix_i)^2].
\label{fromLag}
\end{multline}
But notice that
\begin{multline*}
\bar\rho:={1\over n}\Big[\sum_{i=1}^nm_i(x_i\dot{y}_i-\dot{x}_iy_i)\Big]^2
+{1\over n}\Big[\sum_{i=1}^nm_i(y_i\dot{u}_i-\dot{y}_iu_i)\Big]^2+\\
{1\over n}\Big[\sum_{i=1}^nm_i(u_i\dot{x}_i-\dot{u}_ix_i)\Big]^2\le\\
\sum_{i=1}^nm_i^2[(x_i\dot{y}_i-\dot{x}_iy_i)^2+(y_i\dot{u}_i-\dot{y}_iu_i)^2+
(u_i\dot{x}_i-\dot{u}_ix_i)^2],
\end{multline*}
where, obviously, $\bar\rho=(\bar\alpha^2+\bar\beta^2+\bar\gamma^2)/n$.
Thus inequality \eqref{fromLag} implies that 
\begin{equation*}
\bar I\sum_{i=1}^nm_i(\dot{u}_i^2+\dot{x}_i^2+\dot{y}_i^2)\ge\bar\rho.
\label{ineq}
\end{equation*}
To prove that $\bar\rho=0$, we need to follow the same steps taken from this
point on in the proof of Theorem \ref{angmom}. We are thus led to the
conclusion that $\bar\alpha=\bar\beta=\bar\gamma=0$.
\end{proof}

We further show how Theorem \ref{angmom3} can be used to draw some conclusions about the behavior of certain solutions of the original system.


\section{Triple collisions}

In his 1912 paper, Sundman also proved that if a solution of the Euclidean 3-body
problem ends in a triple collision, the motion must be planar. Again, his proof rests 
on the integrals of the center of mass.

In the curved 3-body problem we might encounter solutions in which the triangle having the bodies at its vertices moves in ${\bf S}^3_\kappa$ without
remaining confined to a 2-dimensional hemisphere. But we will show that if the collision point is fixed in a sense we will make precise, then the motion must indeed take place on a 2-dimensional hemisphere.


\begin{theorem}
Let $({\bf q},{\bf p})$ be a total-collision analytic solution of equations \eqref{Ham} as in the Equivalence Lemma, with $n=3$, and such that the coordinates of the corresponding solution $(\bar{\bf q},\bar{\bf p})$ of the orthogonal system \eqref{Hamo} satisfy the conditions
\begin{equation}
\sum_{i=1}^3m_i{u}_i(t)=\sum_{i=1}^3m_i{x}_i(t)=\sum_{i=1}^3m_i{y}_i(t)=0
\label{mass}
\end{equation}
for all $t$ in $[t_0,t^*)$. Then the configuration ${\bf q}$ is confined to a 2-dimension\-al hemisphere of curvature $\kappa>0$.
\label{manif}
\end{theorem}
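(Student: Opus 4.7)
The plan is to reduce the theorem to a purely kinematic statement about the orthogonal system in $\mathbb{R}^3$ and then lift back to ${\bf S}^3_\kappa$. I would first invoke Theorem \ref{angmom3} to conclude that the total angular momentum of the orthogonal solution $(\bar{\bf q},\bar{\bf p})$ vanishes, i.e., $\sum_{i=1}^{3} m_i \bar{\bf q}_i\times\dot{\bar{\bf q}}_i = {\bf 0}$. The hypothesis \eqref{mass} provides $\sum m_i \bar{\bf q}_i = {\bf 0}$ and, by differentiation, $\sum m_i \dot{\bar{\bf q}}_i = {\bf 0}$. Thus at each $t$ the three vectors $\bar{\bf q}_1,\bar{\bf q}_2,\bar{\bf q}_3$ are linearly dependent and span a subspace $V(t)\subset\mathbb{R}^3$ of dimension at most two. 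The goal becomes to show that $V(t)$ is constant in $t$.

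The heart of the argument, carried out on the open set where $\bar{\bf q}_1\times\bar{\bf q}_2\neq{\bf 0}$, proceeds as follows. Let ${\bf n}(t)$ be the unit normal to $V(t)=\mathrm{span}(\bar{\bf q}_1,\bar{\bf q}_2)$ and decompose $\dot{\bar{\bf q}}_i={\bf w}_i+\lambda_i{\bf n}$ with ${\bf w}_i\in V(t)$. The identities $\sum m_i\dot{\bar{\bf q}}_i={\bf 0}$ and $\sum m_i\bar{\bf q}_i\times\dot{\bar{\bf q}}_i={\bf 0}$ split into their $V(t)$- and ${\bf n}$-components; the normal part of the former gives $\sum m_i\lambda_i=0$, while the $V(t)$-part of the latter reduces (since $\bar{\bf q}_i\times{\bf n}$ is a rotation of $\bar{\bf q}_i$ inside $V(t)$) to $\sum m_i\lambda_i\bar{\bf q}_i={\bf 0}$. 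Eliminating $\bar{\bf q}_3$ via $m_3\bar{\bf q}_3=-m_1\bar{\bf q}_1-m_2\bar{\bf q}_2$ and using the linear independence of $\bar{\bf q}_1,\bar{\bf q}_2$ forces $\lambda_1=\lambda_2=\lambda_3$; combined with $\sum m_i\lambda_i=0$, this yields $\lambda_1=\lambda_2=\lambda_3=0$, so every $\dot{\bar{\bf q}}_i$ lies in $V(t)$. A direct differentiation of $\bar{\bf q}_1\times\bar{\bf q}_2$ then shows $\dot{\bf n}={\bf 0}$, and $V(t)$ is a fixed 2-plane through the origin on the entire interval.

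The lift back is formal: the linear equation defining $V\subset\mathbb{R}^3$ reads the same in the $(u,x,y,z)$ coordinates of $\mathbb{R}^4$ and so cuts out a 3-dimensional hyperplane $W$ through the origin, whose intersection with ${\bf S}^3_\kappa$ is a totally geodesic 2-sphere of the same curvature $\kappa$; the northern-hemisphere condition $z>0$ singles out a 2-dimensional hemisphere. The Equivalence Lemma then guarantees that ${\bf q}(t)$ is confined to this hemisphere throughout $[t_0,t^*)$.

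The main technical obstacle is the collinear case $\bar{\bf q}_1\times\bar{\bf q}_2\equiv{\bf 0}$, in which $V(t)$ is a priori not uniquely defined. By analyticity of the solution, either collinearity occurs only on a discrete set, on whose complement the argument above produces a constant $V$ that extends by continuity, or the three bodies remain on a common line $\mathbb{R}{\bf u}(t)$ through the origin for all $t$. In the latter case, writing $\bar{\bf q}_i=r_i{\bf u}$ reduces the zero-angular-momentum relation to $\bigl(\sum m_ir_i^2\bigr){\bf u}\times\dot{\bf u}={\bf 0}$; since the moment-of-inertia factor is strictly positive before the total collision and $\dot{\bf u}\perp{\bf u}$, we get $\dot{\bf u}={\bf 0}$, so the line is fixed and any 2-plane containing it can serve as $V$, after which the same lifting argument produces the required hemisphere.
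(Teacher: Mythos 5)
Your proposal is correct, and it rests on the same two pillars as the paper's proof --- Theorem \ref{angmom3} (vanishing of $\bar\alpha,\bar\beta,\bar\gamma$) together with conditions \eqref{mass} and their time derivative --- but it executes the planarity step by a genuinely different route. The paper follows Sundman's original device: it rotates coordinates so that the configuration at the single instant $t_0$ lies in the plane $y=0$, reads the relations $\sum_{i=1}^3 m_iu_i(t_0)\dot{y}_i(t_0)=\sum_{i=1}^3 m_ix_i(t_0)\dot{y}_i(t_0)=\sum_{i=1}^3 m_i\dot{y}_i(t_0)=0$ as a homogeneous linear system in the $\dot{y}_i(t_0)$, and notes that the alternative of a vanishing determinant would put the bodies on a diameter, which is excluded; hence $\dot{y}_i(t_0)=0$ and the motion stays in $y=0$ (a step that tacitly uses the invariance of that plane under the flow of \eqref{Hamo} and uniqueness of solutions). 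You instead track the instantaneous plane $V(t)$ and show that the normal components $\lambda_i$ of the velocities vanish at every $t$, so that the unit normal is constant; this makes the argument purely kinematic --- it uses only the conserved quantities at each instant and never appeals to the equations of motion or to uniqueness --- at the price of a longer computation. Two small remarks: your treatment of the collinear case $\bar{\bf q}_1\times\bar{\bf q}_2\equiv{\bf 0}$ is unnecessary, since a solution as in the Equivalence Lemma is by definition free of diameter singularities on $[t_0,t^*)$, so $\bar{\bf q}_1$ and $\bar{\bf q}_2$ are never parallel there (this is the very exclusion the paper invokes to dispose of the vanishing-determinant alternative); and your lifting step --- the fixed plane $V\subset\mathbb{R}^3$ extends to a hyperplane of $\mathbb{R}^4$ meeting ${\bf S}^3_\kappa$ in a great $2$-sphere, of which $z>0$ selects a hemisphere --- coincides with the paper's closing step after its rotation to $y=0$.
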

\begin{proof}
Let $({\bf q},{\bf p})$ be a solution as above, i.e.~ one for
which the coordinates of the vectors ${\bf q}_i=(u_i,x_i,y_i,z_i)$ satisfy the 
constraints $u_i^2+x_i^2+y_i^2+z_i^2=\kappa^{-1}$, $z_i>0$, and let
$(\bar{\bf q},\bar{\bf p})$ be the corresponding solution of the orthogonal system,
verifying conditions \eqref{mass}. For system \eqref{Hamo} we can now follow Sundman's idea of proof, although the form of this system is different from the 
one of the Euclidean 3-body problem. Using \eqref{mass}, we can assume that at time $t=t_0$ the three bodies of the solution $(\bar{\bf q},\bar{\bf p})$ lie in the plane $y=0$ of $\mathbb{R}^3$. Since, by Theorem \ref{angmom3}, the constant vector of
the angular momentum is such that $\bar{\alpha}=\bar{\beta}=\bar{\gamma}=0$,
we can write that $\sum_{i=1}^3m_i{u}_i(t_0)\dot{{y}}_i(t_0)=\sum_{i=1}^3m_i{x}_i(t_0)\dot{{y}}_i(t_0)=0,$
and from the last equation of \eqref{mass}, we can also conclude that
$\sum_{i=1}^3m_i\dot{{y}}_i(t_0)=0$. The algebraic system of these three 
equations of unknowns $\dot{{y}}_1(t_0), \dot{{y}}_2(t_0)$, and $\dot{{y}}_3(t_0)$  
leads us to two possibilities: either $\dot{{y}}_1(t_0)=\dot{{y}}_2(t_0)=\dot{{y}}_3(t_0)=0$ or 
$$
\det\begin{bmatrix}
{u}_1(t_0) & {u}_2(t_0) & {u}_3(t_0) \\
{x}_1(t_0) & {x}_2(t_0) & {x}_3(t_0) \\
1 & 1 & 1
\end{bmatrix}=0.
$$
In the former case, the solution $(\bar{\bf q},\bar{\bf p})$ is confined to 
the plane $y=0$. In the latter case, the bodies are initially on a straight line
passing through the origin of the coordinate system. But we
excluded the latter situation by eliminating diameter singularities. Thus $(\bar{\bf q},\bar{\bf p})$ is a planar solution, therefore the position vectors of the
original system are of the form ${\bf q}_i=(u_i,x_i, y_i, z_i)$, with $y_i=0, z_i>0$, and $u_i^2+x_i^2+z_i^2=\kappa^{-1}, \ i=1,2,3$, so the bodies move on a 2-dimensional hemisphere.
\end{proof}

\begin{remark}
Conditions \eqref{mass} are naturally satisfied by some classes of solutions
of the curved $n$-body problem, such as the elliptic relative equilibria, but
not satisfied by others, the hyperbolic relative equilibria among them. The
existence of these orbits was proved in  \cite{Diacu1}.
\end{remark}

\begin{remark}
In the proof of Theorem \ref{manif} we used only the homogeneity of the potential, conditions \eqref{mass}, and the integrals of the total angular momentum. So this result is valid for more general potentials than the ones considered in this paper.
\end{remark}


\bigskip


\end{document}